\newtheorem{thm}{Theorem}
\newtheorem{lem}[thm]{Lemma}
\newtheorem{prop}[thm]{Proposition}
\newtheorem{cor}[thm]{Corollary}
\newcommand{\Alt}{\mathrm{Alt}}
\newcommand{\Out}{\mathrm{Out}}
\newcommand{\Fin}{\mathrm{Fin}}
\newcommand{\Core}{\mathrm{Core}}
\newcommand{\rQ}{\mathrm{Q}}
\newcommand{\bC}{\mathbb{C}}
\newcommand{\bF}{\mathbb{F}}
\newcommand{\bN}{\mathbb{N}}
\newcommand{\bQ}{\mathbb{Q}}
\newcommand{\bR}{\mathbb{R}}
\newcommand{\bZ}{\mathbb{Z}}
\newcommand{\mcB}{\mathcal{B}}
\newcommand{\mcI}{\mathcal{I}}
\begin{document}

\title{Subgroups of finite index and the just infinite property}

\author{Colin Reid\\
School of Mathematical Sciences\\
Queen Mary, University of London\\
Mile End Road, London E1 4NS\\
c.reid@qmul.ac.uk}

\maketitle

\begin{abstract}A residually finite (profinite) group $G$ is just infinite if every non-trivial (closed) normal subgroup of $G$ is of finite index.  This paper considers the problem of determining whether a (closed) subgroup $H$ of a just infinite group is itself just infinite.  If $G$ is not virtually abelian, we give a description of the just infinite property for normal subgroups in terms of maximal subgroups.  In particular, we show that such a group $G$ is hereditarily just infinite if and only if all maximal subgroups of finite index are just infinite.  We also obtain results for certain families of virtually abelian groups, including all virtually abelian pro-$p$ groups and their discrete analogues.\end{abstract}

\emph{Keywords}: Group theory; residually finite groups; profinite groups; just infinite groups\\

All groups under consideration will be topological groups.  We have the convention throughout that subgroups are required to be closed, that homomorphisms are required to be continuous, and that generation refers to topological generation.  When we wish to suppress topological considerations, the word `abstract' will be used, for instance `abstract subgroup'.\\

Say a group $G$ is \emph{residually finite} if the intersection of all normal subgroups of finite index is trivial.  Say $G$ is \emph{just infinite} if it is infinite and residually finite, and every non-trivial normal subgroup of $G$ is of finite index.  Say $G$ is \emph{hereditarily just infinite} if every finite index subgroup of $G$ is just infinite, including $G$ itself.  Our main aim in this paper will be to prove the following:

\begin{thm}\label{maxcor}Let $G$ be a just infinite group that is not virtually abelian.  Let $H$ be a non-trivial normal subgroup of $G$.  The following are equivalent:

(i) $H$ is just infinite;

(ii) Every maximal subgroup of $G$ containing $H$ is just infinite.\\

$(*)$ In particular, $G$ is hereditarily just infinite if and only if every maximal subgroup of finite index is just infinite.\end{thm}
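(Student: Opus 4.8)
The plan is to base both implications on the following structural fact, which is where the hypothesis that $G$ is not virtually abelian does its work: if $G$ is just infinite and not virtually abelian, then $C_G(L)=1$ for every non-trivial normal subgroup $L\trianglelefteq G$. Indeed $C_G(L)\trianglelefteq G$, so if it were non-trivial it would have finite index; then $L\cap C_G(L)$ would have finite index in $L$ and lie in $Z(L)$, forcing $L/Z(L)$ finite and hence $L'$ finite by Schur's theorem. As $L'$ is characteristic in $L$ it is normal in $G$, so being finite it is trivial, making $L$ abelian and $G$ virtually abelian, a contradiction. The same reasoning shows $G$ has no non-trivial finite or abelian normal subgroup. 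I would also record that, $H$ being a non-trivial normal subgroup, $H$ and every subgroup containing it have finite index, hence are infinite and residually finite; so for all the subgroups appearing in the statement, being just infinite is equivalent to having no non-trivial normal subgroup of infinite index, and the maximal subgroups in (ii) are exactly the finite-index maximal subgroups above $H$ (when $H=G$, condition (ii) is vacuous, matching (i)).

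For (i)$\Rightarrow$(ii) I would prove the contrapositive. Suppose some maximal $M$ with $H\le M$ is not just infinite, so there is $1\neq N\trianglelefteq M$ with $[M:N]=\infty$. Since $H\trianglelefteq G$ we have $H\trianglelefteq M$, whence $N\cap H\trianglelefteq M$ and in particular $N\cap H\trianglelefteq H$. If $N\cap H=1$ then $[N,H]\le N\cap H=1$, so $N\le C_G(H)=1$ by the structural fact, contradicting $N\neq 1$; hence $N\cap H\neq 1$. Finally $[M:N]=[M:HN]\,[H:H\cap N]$ with $[M:HN]\le[M:H]<\infty$, so $[H:H\cap N]=\infty$. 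Thus $N\cap H$ witnesses that $H$ is not just infinite.

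For (ii)$\Rightarrow$(i) I would argue contrapositively by a minimality argument. Assume $H$ is not just infinite and call a pair $(K,N)$ \emph{bad} if $H\le K\le G$, $N\le H$, $1\neq N\trianglelefteq K$ and $[K:N]=\infty$; such pairs exist. Choose one with $[G:K]$ minimal; it suffices to show $K$ is a maximal subgroup, for then $K$ is a non-just-infinite maximal subgroup containing $H$. If $N_G(N)>K$ then $(N_G(N),N)$ is a bad pair with larger $K$, so minimality gives $N_G(N)=K$, and in particular $K<G$. Suppose $K$ is not maximal and pick a maximal $M$ with $K<M<G$; put $D=N^M$ and $C=\Core_M(N)$, both normal in $M$ and contained in $H$. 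If $C\neq 1$ then $(M,C)$ is a bad pair (as $C\le N$ gives $[M:C]=\infty$) with larger $K$; if $C=1$ but $[M:D]=\infty$ then $(M,D)$ is such a bad pair. The remaining, genuinely delicate, case is $C=1$ with $[M:D]<\infty$: here the $M$-conjugates $N_1,\dots,N_k$ of $N$ are normal subgroups of $H$ with trivial intersection whose product has finite index, exactly the configuration that arises for the coordinate factors of a virtually abelian group. \textbf{This case is the main obstacle.} To defeat it I would exploit that $\bigcap_{i\neq 1}N_i$ centralises $N_1$, so that the vanishing of centralizers of non-trivial normal subgroups constrains these intersections; the goal is to extract from this centralising data either a bad pair whose normaliser strictly contains $K$ (contradicting minimality) or a non-trivial abelian normal subgroup of $G$ (contradicting the hypothesis). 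Making this dichotomy precise and uniform in $k$ is the crux of the whole argument.

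Finally I would deduce $(*)$. If $G$ is hereditarily just infinite then every finite-index subgroup, in particular every finite-index maximal subgroup, is just infinite. Conversely, assume every finite-index maximal subgroup is just infinite and let $U$ be any finite-index subgroup. Its core $H=\Core_G(U)$ is a non-trivial normal subgroup of finite index, and every maximal subgroup containing $H$ has finite index, hence is just infinite by hypothesis; by the equivalence just proved, $H$ is just infinite. If $U$ had a non-trivial normal subgroup $N'$ of infinite index, then $H\trianglelefteq U$ gives $N'\cap H\trianglelefteq U$, the centralizer argument of the second paragraph forces $N'\cap H\neq 1$, and the same index computation forces $[H:N'\cap H]=\infty$, contradicting that $H$ is just infinite. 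Hence every finite-index $U$ is just infinite, i.e.\ $G$ is hereditarily just infinite.
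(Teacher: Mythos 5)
Your easy direction and the deduction of $(*)$ are sound: the centralizer fact $C_G(L)=1$ for non-trivial $L\trianglelefteq G$ is proved correctly (the paper obtains the corresponding implication instead from Lemma \ref{nothji} and Corollary \ref{jiup}, via the finite radical), and the index computations in (i)$\Rightarrow$(ii) work. But the proof as a whole is incomplete, as you yourself flag, and the case you cannot close is not a technical remainder --- it is the entire content of the theorem. In the configuration you reach ($C=\Core_M(N)=1$, $D=N^M$ of finite index, conjugates $N_1,\dots,N_k$ all normal in $H$), chasing normalisers cannot make further progress: $N_G(N)=K$ is already forced by your minimality, the subgroup $\bigcap_{i\neq 1}N_i$ you propose to exploit may simply be trivial, and when it is non-trivial it centralises $N_1$ but need not be abelian or normal in $G$, so neither horn of your hoped-for dichotomy is within reach. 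No induction ``uniform in $k$'' is available along these lines.

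The missing idea is Wilson's notion of a basal subgroup, which the paper isolates as Lemma \ref{baslem}, and which replaces your hoped-for induction by a single extremal choice: among the intersections $N_J=\bigcap_{j\in J}N_j$ of conjugates (taken in $G$; note that here $N^G\le H\le N_G(N)$, so $N\lhd N^G$ as the lemma requires), pick $J$ of largest size with $N_J\neq 1$ and set $B=N_J$. Maximality of $|J|$ forces distinct conjugates of $B$ to intersect trivially; they normalise each other, hence commute; and your own structural fact (no non-trivial abelian normal subgroup, so $Z(B^G)=1$) upgrades this to $B^G=B_1\times\dots\times B_n$, a direct product of the conjugates of $B$. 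In other words, the commuting configuration you identified is not a contradiction to be derived but a structure to be used. With $H$ normal and not just infinite, one obtains such a $B$ that is non-normal and whose every conjugate is normalised by $H$, so $N_G(B)$ is a proper subgroup containing $H$; any maximal $M\geq N_G(B)$ contains a point stabiliser of the transitive $G$-action on the finite set $\Omega_B$ of conjugates, hence is intransitive on $\Omega_B$ (a transitive $M\supseteq N_G(B)$ would give $G=N_G(B)M=M$), and then $B^M\trianglelefteq M$ is non-trivial of infinite index in $M$ because it omits at least one infinite direct factor of $B^G$. This is precisely the paper's Proposition \ref{permji} combined with its proof of Theorem \ref{maxcor}, and it is exactly the device your ``main obstacle'' paragraph is searching for.
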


The hypothesis that $G$ is not virtually abelian is essential, even for $(*)$.  However, the following result does include the virtually abelian case, as will be proved later.  Given a prime $p$, let $O^p(G)$ be the intersection of normal subgroups of $G$ of index a power of $p$.  The just infinite groups of quaternionic type are a family of virtually abelian pro-$2$ groups that first emerged in the classification of pro-$p$ groups of finite coclass (see \cite{Lee}); these will be defined later.

\begin{thm}\label{respthm}Let $G$ be a profinite or discrete group with no non-trivial finite normal subgroups, let $p$ be a prime, and let $H$ be a subgroup of finite index such that:

(i) $O^p(H)=1$;

(ii) every maximal subgroup of $H$ of finite index is just infinite;

(iii) $H$ is not of quaternionic type.\\

Then $G$ is hereditarily just infinite.\end{thm}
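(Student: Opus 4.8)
The plan is to first prove that $H$ is hereditarily just infinite, and then transfer this property up to $G$ using the hypothesis that $G$ has no non-trivial finite normal subgroup. Two preliminary remarks organise the argument. First, $H \neq 1$ (otherwise $G$ is finite, which is impossible for a group that is to be shown hereditarily just infinite), so hypotheses (i) and (ii) have content. Second, being virtually abelian is invariant under commensurability, so $H$ is virtually abelian if and only if $G$ is; this lets me split the whole argument into the virtually abelian case and its negation, and Theorem~\ref{maxcor} will drive the latter.

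The first step is to show, using (i) and (ii) alone, that $H$ is just infinite with no non-trivial finite normal subgroup. Since $O^p(H)=1$ and $H\neq 1$, there is a proper normal subgroup of $p$-power index, and pulling back an index-$p$ subgroup of the corresponding finite $p$-quotient yields a normal subgroup $M_0 \trianglelefteq H$ of index $p$; by (ii) it is just infinite. Now suppose $F\trianglelefteq H$ is finite and non-trivial. Then $F\cap M_0$ is a finite normal subgroup of the just infinite group $M_0$, hence trivial, which forces $|F|=p$ and $H=F\times M_0$. Choosing a finite-index maximal subgroup $M_0' < M_0$ (which exists as $M_0$ is infinite and residually finite), the subgroup $F\times M_0'$ has prime index in $H$, so is maximal of finite index, yet is not just infinite because it contains the finite normal subgroup $F$ of infinite index; this contradicts (ii). Hence $H$ has no non-trivial finite normal subgroup. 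A standard argument now gives just infiniteness: for any non-trivial $N\trianglelefteq H$ and any finite-index maximal $M$, if $N\cap M=1$ then $N$ embeds in $H/M$-type quotient and is finite, contradicting the previous sentence, so $N\cap M\neq 1$; as $M$ is just infinite, $N\cap M$ has finite index in $M$, whence $N$ has finite index in $H$. If $H$ is \emph{not} virtually abelian, then since $H$ is just infinite the final clause of Theorem~\ref{maxcor} applies, and (ii) is exactly its right-hand condition, so $H$ is hereditarily just infinite.

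For the transfer, set $N=\Core_G(H)$, a normal subgroup of finite index. Every finite-index subgroup of $N$ is a finite-index subgroup of $H$, hence just infinite, so $N$ is itself hereditarily just infinite. It follows readily that $G$ is just infinite: a non-trivial $M\trianglelefteq G$ with $M\cap N=1$ would be finite and normal in $G$, which is excluded, while $M\cap N\neq 1$ forces finite index. To obtain the hereditary property it suffices to show that no finite-index subgroup $K\le G$ has a non-trivial finite normal subgroup $F$, for then the just-infiniteness argument applies verbatim to each $K$. Put $L=K\cap N$, which is normal in $K$ (as $N\trianglelefteq G$), hereditarily just infinite, and of finite index in $K$. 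Then $F\cap L$ is a finite normal subgroup of $L$, so trivial, and $[F,L]\le F\cap L=1$, so $F$ centralises $L$. In the non-virtually-abelian case, put $E=\Core_G(L)\trianglelefteq G$; it has finite index and, being non-virtually-abelian, is non-abelian, so $Z(E)=1$. Since $F$ centralises $E\le L$, we have $F\le C_G(E)\trianglelefteq G$, and $C_G(E)\cap E=Z(E)=1$ makes $C_G(E)$ finite, hence trivial; thus $F=1$. This completes the non-virtually-abelian case: $G$ is hereditarily just infinite.

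The \textbf{main obstacle} is the virtually abelian case, where both Theorem~\ref{maxcor} and the trivial-centre arguments above fail (a virtually abelian just infinite group has non-trivial centre). Here hypothesis (i) confines $H$ to a virtually abelian pro-$p$ group, or its discrete analogue, and one must invoke the separate structural analysis of such just infinite groups rather than Theorem~\ref{maxcor}. Hypothesis (iii) is exactly what this analysis needs: the just infinite groups of quaternionic type are the virtually abelian pro-$2$ groups possessing a finite-index maximal subgroup that is not just infinite, so that for them the maximal-subgroup criterion, and with it the conclusion, fails. Proving that every non-quaternionic $H$ satisfying (i) and (ii) is hereditarily just infinite, and then running the centraliser transfer with the virtually abelian structure in place of the trivial-centre fact, is where the real work of this case resides; I expect this to require the classification-type results for virtually abelian just infinite groups that the paper develops later.
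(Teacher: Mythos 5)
Your argument for the non-virtually-abelian case is sound and close in spirit to the paper's: you extract from (i) and (ii) that $\Fin(H)=1$ and that $H$ is just infinite, apply statement $(*)$ of Theorem \ref{maxcor} to $H$, and transfer the conclusion to $G$. Three repairable slips: $F\times M_0'$ need not have prime index in $H$ (its index is $[M_0:M_0']$), though it is still maximal by the modular law, since $F\le K\le H$ forces $K=F(K\cap M_0)$; residual finiteness of $H$ (and of $G$ and its finite-index subgroups) is never verified, though it follows because the finite residual of $H$ lies in $\Core_H(N)\le N$ for every finite-index $N\lhd M_0$; and $Z(E)=1$ does not follow from $E$ being ``non-abelian'' --- the correct reason is that $Z(E)$ is an abelian normal subgroup of the just infinite, non-virtually-abelian group $G$, and such a group has no non-trivial abelian normal subgroups. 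Note also that the paper's Lemma \ref{nothji} already provides the transfer from $H$ up to $G$ using only $\Fin(G)=1$, uniformly in both cases, so your centraliser argument (which genuinely needs trivial centre) could simply be replaced by a citation.

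The genuine gap is the virtually abelian case, which you explicitly leave open; but that case is the entire point of Theorem \ref{respthm} --- for groups that are not virtually abelian the statement is essentially a corollary of Theorem \ref{maxcor} plus Lemma \ref{nothji}, and hypotheses (i) and (iii) exist solely to control the virtually abelian situation. The paper settles it as follows: by Proposition \ref{vastr}, $H$ has a self-centralising normal subgroup $A\cong O^d$ of finite index with $H/A$ acting faithfully and irreducibly over the field of fractions $F$ of $O$; hypothesis (i) forces $H/A$ to be a finite $p$-group and $(O,F)$ to be $(\bZ_p,\bQ_p)$ or $(\bZ,\bQ)$; Corollary \ref{imprim} together with (ii) forces this linear group to be primitive; Theorem \ref{leethm} then leaves only $C_p$ in dimension $p-1$, or $\rQ_{16}$ in dimension $4$ over $\bQ_2$; hypothesis (iii) excludes the latter, and in the former case (ii) applied to the maximal subgroup $A$ forces $p-1=1$, leaving only $O$ itself and the infinite (pro-)$2$ dihedral group, both hereditarily just infinite. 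None of this appears in your proposal. Moreover, your description of why (iii) is needed is backwards: a group of quaternionic type has \emph{all} of its maximal open subgroups just infinite (so it satisfies (ii)) and yet is \emph{not} hereditarily just infinite; if, as you assert, such groups had a maximal finite-index subgroup that is not just infinite, then hypothesis (iii) would be redundant given (ii), and the theorem would not need it at all.
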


The following example shows that the word `normal' in the statement of Theorem \ref{maxcor} cannot be replaced with `finite index', even in the case that $G$ is a pro-$p$ group.

\paragraph{Example 1}Let $A$ be the group $V \rtimes C_p$, where $V$ is a vector space of dimension $p$ over $\bF_p$, and $C_p$ acts by permuting a basis of $V$.  There is a natural affine action of $A$ on $V$, extending the right regular action of $V$ on itself.  Let $K$ be any just infinite group that is not virtually abelian, and let $G$ be the permutation wreath product $K \wr_V A$ of $K$ by $A$ acting on $V$, where $A$ acts in the way described.  Note that any group of the form $K \wr_\Omega P$, where $K$ is just infinite and $P$ acts faithfully and transitively on the finite set $\Omega$, is necessarily just infinite.  In particular, $G$ is just infinite, with a subgroup $H$ of index $p^2$ of the form $K^{p^p} \rtimes W$, where $W$ is a subgroup of $V$ of index $p$ that is not normal in $A$.  Clearly $H$ is not just infinite, as $W$ does not act transitively on the copies of $K$; however, the unique maximal subgroup $M$ of $G$ containing $H$ is of the form $K \wr_V V$; since $V$ acts regularly on itself, this $M$ is just infinite.

\paragraph{Definition}The \emph{finite radical} $\Fin(G)$ of a residually finite group $G$ is the union of all finite normal subgroups of $G$.  Note that a residually finite topological group is necessarily a Hausdorff space, and so all finite abstract subgroups are closed.  However, the finite radical itself need not be closed.

\begin{lem}[Dicman \cite{Dic}]Let $G$ be an abstract group, and let $H$ be the subgroup generated by a set of elements $X$ that is a union of conjugacy classes.  Suppose that $X$ is finite and each element of $X$ is of finite order.  Then $H$ is finite.\end{lem}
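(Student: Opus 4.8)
The plan is to exhibit an explicit finite normal form for the elements of $H$ and count the possibilities. Write $X$ as a disjoint union of conjugacy classes $C_1,\dots,C_t$, and let $d_i$ denote the common order of the elements of $C_i$. My first observation would be that every element of $H$ is a \emph{positive} word in $X$, i.e. a product $g_1 g_2\cdots g_r$ with each $g_j\in X$ and no inverses. This is where finite order enters: for $x\in X$ we have $x^{-1}=x^{\,\mathrm{ord}(x)-1}$, a positive power of $x$, so the inverse of each generator is already a positive word; since $X$ generates $H$, so is every element of $H$. It therefore suffices to bound the length of such positive words modulo equality in $H$.

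The key tool is the commutation identity $yx=(yxy^{-1})\,y$. Because $X$ is a union of conjugacy classes, $yxy^{-1}$ lies in the same class as $x$ and hence again in $X$. Thus any two adjacent factors of a positive word may be transposed at the cost of replacing the left-hand factor by another member of its own conjugacy class, leaving unchanged the multiset recording how many factors fall in each class. Applying this repeatedly (a bubble-sort argument with respect to the ordering of the classes) I would rearrange an arbitrary positive word into a block form $w_1 w_2\cdots w_t$, where each $w_i$ is a product of factors all drawn from $C_i$.

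Within a single block $w_i$ the same identity lets me gather equal factors: sweeping a fixed $z\in C_i$ leftwards, each swap $yz=z\,(z^{-1}yz)$ carries a copy of $z$ to the left while keeping every other factor inside $C_i$. Treating the distinct elements of $C_i$ in turn rewrites $w_i$ as a product $\prod_{z\in C_i} z^{s_z}$; and since $z^{d_i}=1$ I may reduce each exponent to the range $0\le s_z<d_i$. Hence each block is determined by at most $|C_i|$ exponents, each bounded by $d_i$, so there are at most $d_i^{|C_i|}$ possibilities for $w_i$ and therefore at most $\prod_{i=1}^{t} d_i^{|C_i|}$ normal forms in total. As every element of $H$ equals one of these, $H$ is finite. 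The step I expect to require the most care is the sorting argument: one must check that the rearrangement genuinely terminates and that it preserves the per-class multiset of factors, so that the final count is honest; the reductions to positive words and modulo the element orders are then routine bookkeeping.
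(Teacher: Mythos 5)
The paper itself gives no proof of this lemma (it is quoted from Dicman's paper), so your argument must stand on its own, and it does not quite do so. Your first two steps are sound: since each generator has finite order, every element of $H$ is a positive word in $X$; and the swap $yx=(yxy^{-1})y$, which keeps every factor inside its own $G$-conjugacy class, sorts any positive word into class blocks $w_1\cdots w_t$ (the per-class factor counts are invariant and the number of inversions strictly drops, so this sort terminates). The gap is in the within-class gathering. Sweeping all copies of $z_1\in C_i$ to the front does work, because conjugation by $z_1$ is injective and fixes $z_1$, hence carries non-$z_1$ factors to non-$z_1$ factors. But when you next sweep $z_2$ through the remainder, conjugation by $z_2$ only preserves $C_i$, not $C_i\setminus\{z_1\}$: a swap $yz_2=z_2(z_2^{-1}yz_2)$ turns the factor $y=z_2z_1z_2^{-1}$ into $z_1$, reintroducing copies of $z_1$ to the right of the block you already built. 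Concretely, in $\Sym(3)$ with $z_1=(12)$, $z_2=(13)$, the $z_1$-free word $(23)(13)$ rewrites to $(13)(12)$. So "treating the distinct elements of $C_i$ in turn" does not yield the form $\prod_{z}z^{s_z}$ after one pass, and iterating the procedure has no evident termination measure: the length is non-increasing but can stabilise while the word keeps changing, so a priori the rewriting could cycle forever. Consequently neither the normal form nor the bound $d_i^{|C_i|}$, hence not finiteness, is established.

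The standard repair (essentially Dietzmann's original argument) keeps your moves but abandons the full normal form. Given $h\in H$, choose a positive word in $X$ of \emph{minimal} length representing it. For any single $x\in X$, sweeping all occurrences of $x$ to the front preserves both the length and the number of occurrences of $x$ (injectivity of conjugation by $x$ is exactly what is needed here, and only one letter at a time is ever gathered). If $x$ occurred at least $\mathrm{ord}(x)$ times, you could cancel $x^{\mathrm{ord}(x)}$ and shorten the word, contradicting minimality. Hence in a minimal word each $x\in X$ occurs fewer than $\mathrm{ord}(x)$ times, so minimal words have length at most $\sum_{x\in X}(\mathrm{ord}(x)-1)$, and $H$ is contained in the finite set of positive words of bounded length over the finite alphabet $X$. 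This salvages your strategy with only a local change: replace the global sorted form by a length bound extracted from minimality.
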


\begin{cor}Let $G$ be a group and let $x \in G$.  Then $x \in \Fin(G)$ if and only if $x$ has finite order and $C_G(x)$ has finite index.\end{cor}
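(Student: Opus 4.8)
The plan is to prove both directions of the equivalence, using Dicman's lemma as the main tool. Recall that $\Fin(G)$ is the union of all finite normal subgroups of $G$, so an element $x$ lies in $\Fin(G)$ precisely when $x$ belongs to some finite normal subgroup $N \trianglelefteq G$.

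First I would handle the forward direction. Suppose $x \in \Fin(G)$, so $x \in N$ for some finite normal subgroup $N$. Then immediately $x$ has finite order, since $N$ is finite. For the centralizer condition, I would observe that $G$ acts on the finite set $N$ by conjugation, so the kernel of this action $C_G(N) = \bigcap_{n \in N} C_G(n)$ is a finite intersection of centralizers and has finite index in $G$ (its index divides $|N|!$, or one notes $G/C_G(N)$ embeds in $\Sym(N)$). Since $C_G(N) \leq C_G(x)$, we conclude $C_G(x)$ has finite index.

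The more interesting direction is the converse. Suppose $x$ has finite order and $C_G(x)$ has finite index in $G$. The natural idea is to produce a finite normal subgroup containing $x$, and Dicman's lemma is exactly designed for this. I would take $X$ to be the conjugacy class of $x$ in $G$, which is a union of conjugacy classes (namely a single one) and consists of elements of finite order (all conjugate to $x$, hence of the same finite order). The key point is that $X$ is \emph{finite}: the size of the conjugacy class of $x$ equals the index $[G : C_G(x)]$, which is finite by hypothesis. Then Dicman's lemma tells us that the subgroup $H = \langle X \rangle$ generated by this conjugacy class is finite. Since $H$ is generated by a union of conjugacy classes, it is normal in $G$, and it contains $x$; hence $x \in \Fin(G)$.

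The main obstacle, such as it is, lies in the converse direction and is really a matter of verifying that the hypotheses of Dicman's lemma hold for the conjugacy class of $x$, in particular the finiteness of $X$. This is where the finite-index hypothesis on $C_G(x)$ is used, via the orbit–stabilizer correspondence identifying the conjugacy class with the coset space $G/C_G(x)$. One should also confirm that $\langle X \rangle$ is genuinely normal, which follows because conjugation permutes the elements of $X$ and hence fixes the generated subgroup setwise. Since we are working with topological groups but $H$ is finite and the ambient group is Hausdorff, $H$ is automatically closed (as noted in the preceding definition), so no additional topological argument is required.
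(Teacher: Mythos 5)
Your proof is correct and follows essentially the same route as the paper: the substantive direction is handled exactly as in the paper's proof, by applying Dicman's lemma to the (finite, by the orbit--stabilizer correspondence) conjugacy class of $x$ to produce a finite normal subgroup containing $x$. The other direction, which the paper dismisses as ``clear,'' you spell out correctly via the embedding of $G/C_G(N)$ into $\Sym(N)$.
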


\begin{proof}Let $x$ be an element of finite order and centraliser of finite index, let $X$ be the set of conjugates of $x$, and let $H$ be the group generated by $X$.  Then $H$ is finite by Dicman's lemma, and by construction it is normal in $G$; thus $x \in \Fin(G)$.  The converse is clear.\end{proof}

\begin{lem}\label{nothji}Let $G$ be a residually finite group with $\Fin(G)=1$, and let $H$ be a finite index subgroup of $G$.  Then $\Fin(H)=1$.  If $H$ is just infinite then $G$ is just infinite, and if $H$ is hereditarily just infinite then $G$ is hereditarily just infinite.\end{lem}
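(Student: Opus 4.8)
The plan is to treat the three assertions separately and in the stated order, using the second to obtain the third. Throughout I would lean on the Corollary above, which characterises membership in the finite radical by the two conditions `finite order' and `centraliser of finite index'.

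To show $\Fin(H)=1$, I would argue element-wise. Given $x\in\Fin(H)$, the Corollary applied to $H$ shows that $x$ has finite order and that $C_H(x)$ has finite index in $H$; since $[G:H]<\infty$, the subgroup $C_H(x)\le C_G(x)$ has finite index in $G$, and hence so does $C_G(x)$. Applying the Corollary to $G$ then gives $x\in\Fin(G)=1$, so $x=1$ and $\Fin(H)=1$.

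Next, assuming $H$ is just infinite, I would show $G$ is just infinite. The group $G$ is infinite (it contains the infinite group $H$) and residually finite by hypothesis, so it remains to check that every non-trivial normal subgroup $N\trianglelefteq G$ has finite index. The natural move is to intersect with $H$: the subgroup $N\cap H$ is normal in $H$, so if $N\cap H\ne 1$ then $N\cap H$ has finite index in $H$ by just-infiniteness, hence finite index in $G$, whence $N\supseteq N\cap H$ has finite index. The entire difficulty is to rule out the case $N\cap H=1$ with $N\ne 1$. Here I would pass to the core $K=\Core_G(H)$, which is normal of finite index in $G$ and contained in $H$; then $N\cap K\le N\cap H=1$, so the second isomorphism theorem identifies $N$ with $NK/K\le G/K$, a finite group. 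Thus $N$ would be a finite normal subgroup of $G$, forcing $N\le\Fin(G)=1$, a contradiction. This reduction, converting `trivial intersection with $H$' into `finiteness via the finite quotient $G/K$', is the crux, and it is the one place where the hypothesis $\Fin(G)=1$ is indispensable.

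Finally, assuming $H$ is hereditarily just infinite, I would verify that an arbitrary finite-index subgroup $L\le G$ is just infinite. Since $[G:H]<\infty$, the intersection $L\cap H$ has finite index in both $H$ and $L$; being of finite index in $H$, it is just infinite by hypothesis. Now $L$ inherits residual finiteness from $G$, and $\Fin(L)=1$ by the first part of the lemma applied to $L$ in place of $H$. Hence $L$ is a residually finite group with trivial finite radical admitting the just-infinite finite-index subgroup $L\cap H$, and the second part of the lemma (with $L$ and $L\cap H$ in the roles of $G$ and $H$) shows that $L$ is just infinite; as $L$ was arbitrary, $G$ is hereditarily just infinite. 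I expect the main obstacle to be exactly the excluded case $N\cap H=1$ in the second part, everything else being formal: the key point is that a normal subgroup meeting the finite-index subgroup $H$ trivially must embed in the finite quotient $G/\Core_G(H)$ and so be finite, after which $\Fin(G)=1$ finishes the argument.
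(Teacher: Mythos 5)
Your proof is correct, but it is organised differently from the paper's, so a comparison is worthwhile. The paper argues contrapositively and proves the hereditary statement in a single stroke: if $G$ is not hereditarily just infinite, it picks a finite-index subgroup $L$ of $G$ with a normal subgroup $K$ of infinite index which, thanks to $\Fin(L)=1$ (the first part of the lemma applied to $L$), can be taken to be infinite; then $K\cap H$ is infinite, hence a non-trivial normal subgroup of infinite index in $L\cap H$, so $H$ is not hereditarily just infinite, and the special case $L=G$ gives the plain just infinite statement. You instead argue directly: a non-trivial $N\trianglelefteq G$ either meets $H$ non-trivially (and then just-infiniteness of $H$ forces finite index) or meets $\Core_G(H)$ trivially, in which case $N$ embeds in the finite group $G/\Core_G(H)$ and so lies in $\Fin(G)=1$, a contradiction; the hereditary statement then follows formally by applying this to the pairs $(L,\,L\cap H)$. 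The two arguments rest on the same underlying fact viewed from opposite sides --- the paper uses that an infinite subgroup meets a finite-index subgroup non-trivially, you use that a subgroup meeting a finite-index normal subgroup trivially must be finite --- and each invokes the trivial finite radical exactly once to dispose of the degenerate case. Your version is more modular (the just infinite transfer stands alone, and the hereditary case is a formal corollary of it); the paper's is more economical, dispatching both claims with one contrapositive. A small point in your favour: the paper's assertion that $K$ may be taken infinite silently relies on the first part of the lemma applied to $L$, a dependence your write-up makes explicit via the core embedding.
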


\begin{proof}Any element $x$ of $\Fin(H)$ must have finite order and finitely many conjugates in $H$, and hence also in $G$.  This means $x \in \Fin(G)$, and so $x=1$.  We may now assume that $G$ is not hereditarily just infinite.  Then there is a subgroup $L$ of $G$ of finite index, with an infinite normal subgroup $K$ of infinite index.  This gives a non-trivial normal subgroup $K \cap H$ of $L \cap H$ of infinite index.  As $L \cap H$ has finite index in $H$, this means that $H$ is not hereditarily just infinite.  If $G$ is not just infinite, we can take $L=G$, and so $H$ cannot be just infinite.\end{proof}

\begin{cor}\label{jiup}Let $G$ be a just infinite group, and let $H$ be a just infinite subgroup of $G$ of finite index.  Then every subgroup of $G$ containing $H$ is just infinite.\end{cor}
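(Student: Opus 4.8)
The plan is to obtain this as essentially a double application of Lemma \ref{nothji}. Fix a subgroup $M$ with $H \le M \le G$; since $H$ has finite index in $G$, certainly $M$ has finite index in $G$ and $H$ has finite index in $M$. The goal is to show that $M$ is just infinite, and the natural route is to run Lemma \ref{nothji} with $M$ in the role of the ambient group and $H$ as its finite-index subgroup: this immediately yields that $M$ is just infinite once the two standing hypotheses of the lemma, namely residual finiteness and triviality of the finite radical, are verified for $M$.

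First I would record that $\Fin(G)=1$. Indeed, if $\Fin(G)$ were non-trivial it would contain a non-trivial finite normal subgroup $N$ of $G$; as $G$ is just infinite, $N$ would have finite index, forcing $G$ to be finite and contradicting the fact that a just infinite group is infinite. Hence $\Fin(G)=1$.

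Next I would check the two hypotheses of Lemma \ref{nothji} for the pair $(M,H)$. Residual finiteness of $M$ is inherited from $G$, since $M$ is a (closed) subgroup of the residually finite group $G$ and a subgroup of a residually finite group is again residually finite. For the finite radical, I would apply Lemma \ref{nothji} a first time, to the pair $(G,M)$: as $\Fin(G)=1$ and $M$ has finite index in $G$, the lemma gives $\Fin(M)=1$.

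With $M$ residually finite, $\Fin(M)=1$, and $H$ a just infinite subgroup of finite index in $M$, the second application of Lemma \ref{nothji}—now to the pair $(M,H)$—yields that $M$ is just infinite, as required. There is essentially no hard step here: all of the substantive work has already been carried out in Lemma \ref{nothji}, and the only points to confirm are the bookkeeping of the indices and the transfer of the conditions \emph{residually finite} and \emph{$\Fin=1$} from $G$ down to the intermediate subgroup $M$.
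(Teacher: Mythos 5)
Your proof is correct and is essentially the paper's own argument: the corollary is stated as an immediate consequence of Lemma \ref{nothji}, and your double application of that lemma (first to the pair $(G,M)$ to get $\Fin(M)=1$ after noting $\Fin(G)=1$, then to $(M,H)$ to conclude $M$ is just infinite) is exactly the intended reasoning, with the routine verifications of residual finiteness correctly handled.
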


We now focus on groups that are not virtually abelian.  Much of the existing general theory of just infinite groups that are not virtually abelian is due to J. S. Wilson, who gives an excellent account of his results on the subject in \cite{WilNH}.  In particular, we will be using Wilson's concept of basal subgroups.

\paragraph{Definition}Given a group $G$ and a subgroup $H$, write $H^G$ for the normal closure of $H$ in $G$. A non-trivial subgroup $B$ of $G$ is \emph{basal} if 
\[ B^G = B_1 \times \dots \times B_n,\]
for some $n \in \bN$, where $B_1, \dots, B_n$ are the conjugates of $B$ in $G$.  If $B$ is a basal subgroup of $G$, write $\Omega_B$ for the set of conjugates of $B$ in $G$, equipped with the conjugation action of $G$.  (Unless otherwise stated, we will always define $\Omega_B$ in terms of the group $G$.)

\begin{lem}\label{baslem}Let $G$ be a just infinite group that is not virtually abelian.  Let $K$ be a non-trivial subgroup of $G$ such that $K \lhd K^G$, and let $\{K_i \; i \in I\}$ be the set of conjugates of $K$ in $G$; given $J \subseteq I$, define $K_J := \bigcap \{K_j \;|\;j \in J\}$.  Then $I$ is finite, and there some $J \subseteq I$ such that $K_J$ is basal.\end{lem}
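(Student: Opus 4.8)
The plan is to establish the two assertions in turn, first the finiteness of $I$ and then the existence of a basal intersection. For the finiteness: since $K \neq 1$, its normal closure $M := K^G$ is a non-trivial normal subgroup of $G$, hence of finite index because $G$ is just infinite. The hypothesis $K \lhd K^G$ says exactly that $M \le N_G(K)$, so $N_G(K)$ also has finite index, and therefore $I$, which is in bijection with the coset space $G/N_G(K)$, is finite. I will write the conjugates as $K_1, \dots, K_n$. A useful preliminary observation is that each conjugate $K_i = gKg^{-1}$ satisfies $K_i \lhd K^G = M$ (conjugate the relation $K \lhd K^G$), so every intersection $K_J$ is a normal subgroup of $M$.

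For the basal subgroup, the idea is to choose $J \subseteq I$ maximal subject to $K_J \neq 1$, and to set $B := K_J$. First I would check that the conjugates of $B$ are exactly the subgroups $K_{\sigma(J)}$, as $\sigma$ ranges over the image of $G$ in $\Sym(I)$, and that each such $\sigma(J)$ is again maximal with non-trivial intersection (otherwise applying $\sigma^{-1}$ contradicts the maximality of $J$). The key consequence is that two distinct conjugates $K_{\sigma(J)}$ and $K_{\tau(J)}$ meet in $K_{\sigma(J) \cup \tau(J)} = 1$, since $\sigma(J) \cup \tau(J)$ properly contains the maximal set $\sigma(J)$. Because these conjugates are all normal in $M$ and meet trivially in pairs, the commutator of two distinct conjugates lies in their intersection and so vanishes; thus the conjugates of $B$ pairwise commute.

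It remains to upgrade pairwise-trivial intersection into a genuine internal direct product, that is, to show $B_i \cap \prod_{j \neq i} B_j = 1$, where $B_1, \dots, B_m$ denote the conjugates of $B$. This is the step I expect to be the main obstacle, and it is where the hypothesis that $G$ is not virtually abelian enters. Writing $T := B^G = \langle B_1, \dots, B_m\rangle$, any element of $B_i \cap \prod_{j \neq i} B_j$ commutes with every $B_j$ (with the factors $j \neq i$ because the conjugates commute, and with $B_i$ because it lies in $\prod_{j \neq i} B_j$), so it lies in $Z(T)$; hence it suffices to prove $Z(T) = 1$. For this I would argue that $T$ is a non-trivial normal subgroup of $G$, hence of finite index, and that if $Z(T)$ were non-trivial then, being characteristic in $T$ and so normal in $G$, it would be of finite index in $G$ and therefore in $T$. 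Schur's theorem would then force $T'$ to be finite; but $T'$ is again normal in $G$, and a just infinite group has no non-trivial finite normal subgroup (since $\Fin(G) = 1$), so $T' = 1$, making $T$ abelian of finite index in $G$ and contradicting the hypothesis. With $Z(T) = 1$ in hand, the intersection condition holds, $T = B_1 \times \dots \times B_m$, and $B = K_J$ is basal as required.
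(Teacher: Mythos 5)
Your proof is correct and follows essentially the same route as the paper's: choose $J$ maximal subject to $K_J \neq 1$, observe that distinct conjugates of $B = K_J$ intersect trivially and commute (being normal in $K^G$), and conclude that $B^G$ is the direct product of the conjugates because the centre of $B^G$ is trivial, which holds since $G$ is just infinite and not virtually abelian. The only cosmetic difference is at that last step: the paper simply notes that $Z(B^G)$ is an abelian normal subgroup of $G$ and hence trivial, whereas your detour through Schur's theorem is unnecessary --- once $Z(T)$ is non-trivial, abelian and normal in $G$, it already has finite index and makes $G$ virtually abelian directly.
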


\begin{proof}Clearly $K^G$ has finite index in $G$, so $I$ is finite.  Now let $\mcI$ be the set of those $I' \subseteq I$ for which $K_{I'}$ is non-trivial, let $J$ be an element of $\mcI$ of largest size, and set $B=K_J$.  Suppose $B^g$ is a conjugate of $B$ distinct from $B$.  Then $B^g$ is of the form $K_{J'}$ where $|J'|=|J|$; by construction, this means $B \cap B^g = 1$, and also that $B$ and $B^g$ normalise each other, so in fact $[B,B^g]=1$.  Let $L=B^G$; now $C_L(B)$ contains all conjugates of $B$ apart from $B$ itself, so $L=BC_L(B)$.  Finally, note that $G$ has no non-trivial abelian normal subgroups, so $Z(L)=1$; hence $L=B \times C_L(B)$.  By symmetry, this means $L$ is a direct product of the conjugates of $B$, so $B$ is basal.\end{proof}

The following proposition will quickly lead to a proof of Theorem \ref{maxcor}. 

\begin{prop}\label{permji}Let $G$ be a just infinite group that is not virtually abelian, and let $H$ be a subgroup of $G$ of finite index.  Let $\mcB$ be the set of basal subgroups of $G$.  Then the following are equivalent:

(i) $H$ is not just infinite;

(ii) there is some $B \in \mcB$ such that $H$ acts intransitively on $\Omega_B$ and $\Core_G(H)$ acts trivially.\end{prop}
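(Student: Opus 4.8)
The plan is to prove the two implications separately, the substance lying entirely in $(i)\Rightarrow(ii)$. Throughout I write $C=\Core_G(H)$, which is a non-trivial normal subgroup of finite index in $G$ (it is the kernel of the action of $G$ on the finite set $G/H$), and I record that $\Fin(G)=1$, since a just infinite group can have no non-trivial finite normal subgroup; hence $\Fin(H)=1$ by Lemma \ref{nothji}, so any non-trivial normal subgroup of $H$ is automatically infinite.

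For $(ii)\Rightarrow(i)$ I would argue directly, not even using the hypothesis on $\Core_G(H)$. Given a basal $B$ on which $H$ acts intransitively, write $B^G=B_1\times\dots\times B_n$ and let $\Delta\subsetneq\Omega_B$ be a single $H$-orbit, so that $P_\Delta:=\prod_{B_i\in\Delta}B_i$ is normalised by $H$; thus $N:=H\cap P_\Delta$ is normal in $H$. Since $B$ is basal, $B^G$ has finite index and so is infinite, forcing each $B_i$, and hence $P_\Delta$ and the complementary factor $P_{\Omega_B\setminus\Delta}$, to be infinite. Then $N$ is infinite (it has finite index in $P_\Delta$), while projecting $H\cap B^G$ onto $P_{\Omega_B\setminus\Delta}$ gives a finite-index, hence infinite, image with kernel exactly $N$, exhibiting $N$ as a normal subgroup of infinite index. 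So $H$ is not just infinite.

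For $(i)\Rightarrow(ii)$ the first task is to manufacture a subgroup to which Lemma \ref{baslem} applies. If $H$ is not just infinite, choose $K\lhd H$ non-trivial of infinite index; then $K$ is infinite, and on replacing $K$ by $K\cap C$ (still normal in $H$, of finite index in $K$, and of infinite index in $H$) I may assume $K\le C$. The key reduction is that $C$ normalises \emph{every} conjugate $K_i=K^{g_i}$ of $K$: for $c\in C$ one rewrites $K_i^{\,c}=(K^{c'})^{g_i}$ with $c'=g_ic g_i^{-1}\in C\le H$, and $K^{c'}=K$ because $K\lhd H$. In particular $K\lhd C$, and since $K^G\le C$ this yields $K\lhd K^G$, so Lemma \ref{baslem} produces a basal $B=K_J$ for some $J$ of maximal size with $K_J\ne1$; by that maximality one has the clean support description $B\le K_i\iff i\in J$, and its conjugate form $B^g\le K_i\iff i\in Jg$. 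Triviality of the $\Core_G(H)$-action on $\Omega_B$ is then immediate, since each conjugate of $B$ is an intersection of the $K_i$ and $C$ normalises all of them. For intransitivity I would examine $\Gamma:=\{\,B^g : B^g\le K_1\,\}=\{\,B^g:1\in Jg\,\}$: because $K_1=K\lhd H$, the set $\Gamma$ is a union of $H$-orbits, it is non-empty (conjugate some $K_{j_0}$ with $j_0\in J$, so $B\le K_{j_0}$, onto $K_1$), and it is proper (as $J\subsetneq\{1,\dots,m\}$ and $G$ is transitive on the $K_i$, some conjugate of $B$ avoids $K_1$); hence $H$ cannot be transitive on $\Omega_B$.

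I expect the main obstacle to be precisely this reduction in the forward direction: a normal subgroup $K$ of $H$ need not be normal in its own normal closure, so Lemma \ref{baslem} is not available for $K$ as given. The device of intersecting with $C=\Core_G(H)$ and exploiting $C\lhd G$ to force $K\lhd C$, and therefore $K\lhd K^G$, is what unlocks the basal machinery, and it is the same device that hands us the triviality of the core action for free. Once the basal subgroup $B=K_J$ and its support description are in hand, the decisive (if short) remaining point is the translation of ``$H$ is intransitive'' into the condition $1\in Jg$ together with the observation that $K_1\lhd H$ makes $\Gamma$ an $H$-invariant, proper, non-empty subset of $\Omega_B$.
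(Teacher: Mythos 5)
Your proof is correct and follows essentially the same route as the paper: both directions hinge on replacing $K$ by $K \cap \Core_G(H)$ so that $K^G \leq \Core_G(H) \leq H$ forces $K \lhd K^G$, then applying Lemma \ref{baslem} and observing that the conjugates of $B$ contained in $K$ form a non-empty, proper, $H$-invariant subset of $\Omega_B$; your $(ii)\Rightarrow(i)$ argument via $N = H \cap P_\Delta$ and the projection onto the complementary factor is the paper's argument with $R = B^H$ and $R \cap H$ in slightly different clothing. The one assertion you leave unjustified is $J \subsetneq I$, which your properness argument needs: if $J = I$ then $B = K_I$ would be a non-trivial normal subgroup of the just infinite group $G$, hence of finite index, contradicting $B \leq K$ with $K$ of infinite index --- this is precisely the point where the paper invokes the infinite index of $K$ (``not all conjugates of $B$ are contained in $K$''), so the fix is one line from facts already in your setup.
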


\begin{proof}Assume (i), and let $R$ be a non-trivial normal subgroup of $H$ of infinite index.  By Lemma \ref{nothji}, $R$ is infinite.  This means that $K=R \cap \Core_G(H)$ is an infinite normal subgroup of $H$ of infinite index such that $K \lhd K^G$.  By Lemma \ref{baslem}, there is a basal subgroup $B$ of $G$ which is an intersection of conjugates of $K$; by conjugating in $G$ if necessary, we may assume $B \leq K$.  Now $\Core_G(H)$ normalises $K$ and hence every conjugate of $K$, so $\mathrm{Core}_G(H)$ acts trivially on $\Omega_B$.  Furthermore, not all conjugates of $B$ are contained in $K$, as $K$ has infinite index, but the conjugates of $B$ contained in $K$ form a union of $H$-orbits on $\Omega_B$, which is non-empty as $B \leq R$.  So $H$ acts intransitively on $\Omega_B$ as required for (ii).\\

Assume (ii), and let $R=B^H$.  Then $R$ is an infinite subgroup of $B^G$ of infinite index, since $H$ acts intransitively on $\Omega_B$, so $R \cap H$ is an infinite subgroup of $H$ of infinite index; moreover, $R \cap H$ is normal in $H$.  Hence $H$ is not just infinite, which is (i).\end{proof}

\begin{proof}[Proof of Theorem \ref{maxcor}]Suppose $H$ is not just infinite.  Then by Proposition \ref{permji}, there is some non-normal basal subgroup $B$ of $G$ such that $H$ acts trivially on $\Omega_B$.  It follows that there is a maximal subgroup $M$ of $G$ containing $H$ which acts intransitively on $\Omega_B$, and hence $M$ is also not just infinite.  The converse follows immediately from Corollary \ref{jiup}.  Since every finite index subgroup contains a finite index normal subgroup, $(*)$ also follows.\end{proof}

We now discuss some consequences of Theorem \ref{maxcor}.\\

Let $\Phi_f(G)$ denote the intersection of all maximal subgroups of $G$ of finite index.  Note that if $G$ is a profinite group then $\Phi_f(G)$ is the intersection $\Phi(G)$ of all maximal closed subgroups, and if $G$ is a pro-$p$ group then $G/\Phi(G)$ is the largest elementary abelian image of $G$, which is finite in this case if and only if $G$ is finitely generated.

\begin{cor}Let $G$ be a residually finite group such that $\Phi_f(G)$ has finite index in $G$, and such that $G$ is not virtually abelian.  Then $G$ is hereditarily just infinite if and only if $\Phi_f(G)$ is just infinite and $\Fin(G)$ is trivial.\end{cor}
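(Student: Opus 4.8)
The plan is to prove the two implications separately, invoking the machinery already developed: Lemma \ref{nothji}, Corollary \ref{jiup}, and the equivalence $(*)$ of Theorem \ref{maxcor}.

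The forward direction should be essentially immediate. If $G$ is hereditarily just infinite then in particular $G$ is just infinite, hence infinite and residually finite; any non-trivial finite normal subgroup would be a non-trivial normal subgroup of infinite index, which is impossible, so $\Fin(G)=1$ follows at once. Moreover $\Phi_f(G)$ is a subgroup of finite index by the standing hypothesis, and so it is just infinite directly from the definition of hereditarily just infinite.

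For the converse I would first upgrade the hypothesis on $\Phi_f(G)$ to the whole group. Since $\Fin(G)=1$ and $\Phi_f(G)$ is a just infinite (hence infinite) subgroup of finite index, Lemma \ref{nothji} shows that $G$ itself is just infinite, and $G$ is infinite because it contains $\Phi_f(G)$. Now $G$ is just infinite and not virtually abelian, so $(*)$ of Theorem \ref{maxcor} applies, reducing the task to showing that every maximal subgroup $M$ of $G$ of finite index is just infinite. But by the very definition of $\Phi_f(G)$ as the intersection of all such $M$, each of them contains $\Phi_f(G)$; since $\Phi_f(G)$ is just infinite of finite index and $G$ is just infinite, Corollary \ref{jiup} gives that every subgroup of $G$ containing $\Phi_f(G)$ is just infinite, and in particular every maximal subgroup of finite index is. Hence $G$ is hereditarily just infinite.

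The only genuinely non-formal step is the passage from ``$\Phi_f(G)$ just infinite'' to ``$G$ just infinite'': this is exactly where $\Fin(G)=1$ is needed and where Lemma \ref{nothji} does the work, the point being that a just infinite finite-index subgroup can fail to force the overgroup to be just infinite if finite normal subgroups are allowed to appear. After that the argument is bookkeeping, the key observation being that the reduction supplied by $(*)$ is precisely to maximal subgroups of finite index, and these are exactly the subgroups guaranteed to contain $\Phi_f(G)$, so Corollary \ref{jiup} closes the loop. I do not expect to use the hypothesis that $G$ is not virtually abelian anywhere except in the single application of Theorem \ref{maxcor}.
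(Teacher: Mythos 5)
Your proof is correct and follows essentially the same route as the paper: Lemma \ref{nothji} to pass from ``$\Phi_f(G)$ just infinite, $\Fin(G)=1$'' to ``$G$ just infinite'', Corollary \ref{jiup} to conclude that every maximal finite-index subgroup (each of which contains $\Phi_f(G)$) is just infinite, and then statement $(*)$ of Theorem \ref{maxcor} to finish; the forward direction is immediate in both. The only difference is that you spell out the intermediate step (that $G$ itself is just infinite) which the paper leaves implicit in its one-line proof.
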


\begin{proof}If $\Phi_f(G)$ is just infinite and $\Fin(G)$ is trivial, then every maximal finite index subgroup is just infinite by Corollary \ref{jiup}, and so $G$ is hereditarily just infinite by the theorem.  The converse is immediate.\end{proof}

In general, a just infinite group $G$ may have infinitely many or finitely many maximal subgroups of finite index, with $\Phi_f(G)$ respectively trivial or of finite index.  In the profinite case, this is determined entirely by whether or not the group in question is virtually pronilpotent, where a profinite group is said to be pronilpotent if it is the inverse limit of finite nilpotent groups.

\begin{lem}Let $G$ be a finite group, acting faithfully and primitively on a finite set $\Omega$, and let $H$ be a normal subgroup of $G$.  Then $\Phi(H)=1$.\end{lem}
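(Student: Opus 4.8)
The plan is to combine two standard facts about finite groups: that the Frattini subgroup consists of non-generators, and that every non-trivial normal subgroup of a primitive permutation group acts transitively. First I would observe that $\Phi(H)$ is characteristic in $H$, so that $\Phi(H) \lhd G$ since $H \lhd G$. If $H = 1$ the statement is trivial, so I assume $H \neq 1$ and suppose for contradiction that $\Phi(H) \neq 1$.

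The key geometric input comes from primitivity. For any $N \lhd G$, the $N$-orbits on $\Omega$ form a $G$-invariant partition, since $g(N\alpha) = N(g\alpha)$ when $N$ is normal; by primitivity this partition must be trivial, and a non-trivial $N$ cannot fix every point of $\Omega$ by faithfulness, so $N$ is transitive. Applying this with $N = \Phi(H)$, I conclude that $\Phi(H)$ is transitive on $\Omega$.

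Now fix a point $\alpha \in \Omega$, with stabiliser $G_\alpha$, and set $H_\alpha = H \cap G_\alpha$. Transitivity of $\Phi(H)$ yields the factorisation $H = \Phi(H) H_\alpha$: given $h \in H$, choose $\phi \in \Phi(H)$ with $\phi\alpha = h\alpha$, so that $\phi^{-1}h \in H_\alpha$. The non-generator property of the Frattini subgroup then forces $H = H_\alpha$: for if $H_\alpha$ were proper it would lie in some maximal subgroup $M$ of $H$, and since $\Phi(H) \leq M$ we would obtain $H = \Phi(H) H_\alpha \leq M < H$, a contradiction. Hence $H \leq G_\alpha$, and as $H \lhd G$ with $G$ transitive we get $H \leq \Core_G(G_\alpha) = 1$ by faithfulness, contradicting $H \neq 1$. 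Therefore $\Phi(H) = 1$.

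The crux is the non-generator collapse $H = \Phi(H) H_\alpha \Rightarrow H = H_\alpha$, which is what converts transitivity of $\Phi(H)$ into the fatal conclusion $H \leq G_\alpha$; everything else is routine. I anticipate no serious obstacle here, as there is essentially no computation involved: the only care needed is to confirm that each ingredient is invoked correctly, namely the normality of $\Phi(H)$ in $G$ (via characteristic-in-normal), the transitivity of non-trivial normal subgroups forced by primitivity, and the triviality of the core of a point stabiliser under faithfulness. It is worth noting that primitivity is genuinely needed, since for instance $C_4$ acting regularly on four points is transitive but imprimitive, and has $\Phi(C_4) \neq 1$.
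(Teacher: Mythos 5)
Your proof is correct and rests on the same two pillars as the paper's own argument: the dichotomy that a normal subgroup of a faithful primitive group is trivial or transitive (applied to $\Phi(H)$, which is normal in $G$ since it is characteristic in $H$), and a maximal subgroup of $H$ lying above the point stabiliser $H_\alpha$. The paper argues directly --- any maximal $M \geq H_\alpha$ is intransitive, hence so is $\Phi(H) \leq M$ --- whereas you argue by contradiction via the factorisation $H = \Phi(H)H_\alpha$ and the non-generator property, but unwinding that property yields exactly the paper's inclusion $\Phi(H) \leq M < H$, so the two proofs have the same mathematical content.
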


\begin{proof}Note first that for any normal subgroup $N$ of $G$, the $N$-orbits define a $G$-invariant equivalence relation on $\Omega$, so either $N=1$ or $N$ acts transitively; in particular, since $\Phi(H)$ is normal in $G$ it suffices to show that $\Phi(H)$ acts intransitively.  We may assume $H$ acts transitively, and so $\Omega$ can be regarded as the set of right cosets of some subgroup $K$ of $H$, acted on by right multiplication.  Let $M$ be a maximal subgroup of $H$ containing $K$.  Then $\{Km \; |m \; \in M\}$ is an $M$-orbit, so $M$ acts intransitively.  Hence $\Phi(H) \leq M$ also acts intransitively.\end{proof}

\begin{prop}Let $G$ be a just infinite profinite group.  The following are equivalent:

(i) $G$ is virtually pronilpotent;

(ii) $G$ is virtually pro-$p$, for some prime $p$;

(iii) $G$ has finitely many maximal open subgroups.\end{prop}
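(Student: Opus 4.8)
The plan is to prove the equivalence of the three conditions by establishing the cycle $(ii)\Rightarrow(i)\Rightarrow(iii)\Rightarrow(ii)$, exploiting the structural restrictions that the just infinite property places on $G$. The implication $(ii)\Rightarrow(i)$ is immediate, since any pro-$p$ group is pronilpotent, so a virtually pro-$p$ group is virtually pronilpotent. For $(i)\Rightarrow(iii)$, suppose $G$ has an open pronilpotent subgroup $N$. A pronilpotent profinite group is the direct product of its Sylow pro-$p$ subgroups, and its maximal open subgroups all contain the Frattini subgroup $\Phi(N)$, with $N/\Phi(N)$ a product of finite elementary abelian groups; the key point is that $N$ being open in the just infinite group $G$ forces $N$ to be just infinite by Lemma \ref{nothji} (after passing to $\Fin(G)=1$, which holds since $G$ is profinite just infinite and hence has trivial finite radical), and a just infinite pronilpotent group must be pro-$p$ for a single prime $p$ — otherwise the product decomposition would yield a proper infinite closed normal subgroup of infinite index. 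Then $N/\Phi(N)$ is finitely generated as $N$ is (being open in a finitely generated profinite group), so $N$ has finitely many maximal open subgroups; since maximal open subgroups of $G$ either contain $N$ or meet it in a maximal open subgroup of $N$, and there are only finitely many open subgroups of $G$ containing the open subgroup $N$, condition $(iii)$ follows.

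The substantive direction is $(iii)\Rightarrow(ii)$, and this is where I expect the main obstacle to lie. The strategy is to use $\Phi_f(G)=\Phi(G)$ (valid in the profinite setting, as noted before the previous corollary): if $G$ has only finitely many maximal open subgroups, then $\Phi(G)$ is the intersection of finitely many open subgroups, hence is itself open, so $G/\Phi(G)$ is finite. I would then aim to show that a just infinite profinite group with $G/\Phi(G)$ finite must be virtually pro-$p$. The idea is to analyse the action of $G$ on the finite quotient $G/\Phi(G)$ and on the chief factors sitting inside $\Phi(G)$, invoking the preceding Lemma that $\Phi(H)=1$ whenever $H$ is a normal subgroup of a finite group acting faithfully and primitively, applied to the finite primitive permutation actions arising from the maximal subgroups. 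Concretely, each maximal open subgroup $M$ gives a finite primitive action of $G$ on $G/\Core_G(M)$, and one uses just infiniteness to control the socle of these actions, forcing the relevant chief factors to be $p$-groups for a single prime.

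The hard part will be ruling out the possibility that $G$ is virtually pronilpotent but involves more than one prime while still having finitely many maximal open subgroups — that is, tightly pinning down that finiteness of $G/\Phi(G)$ together with just infiniteness actually forces the single-prime conclusion rather than merely virtual pronilpotence. I expect to handle this by showing that an open pronilpotent (equivalently, here, open normal) subgroup $N$ with $G/\Phi(G)$ finite cannot split as a product of two infinite factors: any such splitting would be preserved well enough by the finite group $G/N$ to produce a proper closed normal subgroup of $G$ of infinite index, contradicting just infiniteness. Once a single Sylow pro-$p$ subgroup carries all the infinite structure, the complementary Sylow pro-$q$ subgroups must be finite, hence trivial since $\Fin(G)=1$, giving $(ii)$. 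The delicate bookkeeping lies in passing between the topological normal closure, the Frattini subgroup, and the finitely many primitive quotients without losing the openness and finite-index properties that make the just infinite hypothesis bite.
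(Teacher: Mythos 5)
There is a genuine gap --- in fact several, and they sit at exactly the points carrying the mathematical content. First, your implication (i)$\Rightarrow$(iii) rests on the claim that the open pronilpotent subgroup $N$ is itself just infinite ``by Lemma \ref{nothji}''. That lemma gives only the upward implication ($H$ just infinite $\Rightarrow$ $G$ just infinite); the downward implication you invoke is false in general, and its failure is the subject of this entire paper (Examples 1--3 exhibit just infinite groups with open subgroups that are not just infinite). The single-prime conclusion must instead be extracted from just-infiniteness of $G$ itself: after replacing $N$ by its core, the Sylow pro-$p$ subgroups of the pronilpotent normal subgroup $N$ are characteristic in $N$, hence closed normal subgroups of $G$; two of them at distinct primes intersect trivially, so at least one has infinite index and is therefore trivial. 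This is precisely the paper's proof of (i)$\Rightarrow$(ii). Second, your route from ``$N$ is pro-$p$'' to (iii) assumes $G$ is topologically finitely generated. This is unjustified (just infinite profinite groups need not be finitely generated) and essentially circular: the finiteness of $N/\Phi(N)$ is the real content here, and the paper proves it by showing $\Phi(O_p(G))$ is a normal subgroup of $G$ that cannot be trivial --- else $O_p(G)$ would be elementary abelian, contradicting $\Fin(O_p(G))=1$, which does follow from Lemma \ref{nothji} --- hence is open. Moreover, your counting step relies on the dichotomy that a maximal open subgroup of $G$ either contains $N$ or meets it in a maximal open subgroup of $N$; this is false (take $G=\Alt(4)$, $N$ the Klein four-group, $M$ a Sylow $3$-subgroup: $M\cap N=1$ is not maximal in $N$). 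The correct patch is the containment $\Phi(N)\leq\Phi(G)$ for $N$ normal, which the paper obtains from its lemma that $\Phi(H)=1$ for normal subgroups $H$ of finite faithful primitive permutation groups.

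Finally, your direction (iii)$\Rightarrow$(ii) is only a plan, not a proof: you concede the ``hard part'' (forcing a single prime out of finiteness of $G/\Phi(G)$) and sketch a socle analysis of the primitive quotients without carrying it out. The paper closes the cycle much more cheaply in the other direction: once $\Phi(G)$ is open (your opening observation here is correct), it invokes the standard fact from \cite{Wil} that the Frattini subgroup of a profinite group is pronilpotent, giving (i) in one line, and the single prime then comes from (i)$\Rightarrow$(ii) via the characteristic-Sylow argument above. So the three ingredients your proposal is missing are: the characteristic-Sylow argument (replacing the illegitimate downward use of Lemma \ref{nothji}), the elementary-abelian/$\Fin$ contradiction showing $\Phi(O_p(G))$ is open (replacing the unjustified finite-generation claim), and the pronilpotence of $\Phi(G)$ (replacing the unexecuted socle analysis).
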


\begin{proof}Assume (i), and let $N$ be the largest pronilpotent normal subgroup of $G$.  Then $N$ is non-trivial and every Sylow subgroup of $N$ is characteristic in $G$.  Let $S$ be a Sylow pro-$p$ subgroup of $N$ and $T$ a Sylow pro-$q$ subgroup of $N$, for distinct primes $p$ and $q$.  Then at least one of $S$ and $T$ has infinite index in $G$, as their intersection is trivial.  It follows that $N$ is a pro-$p$ group for some $p$, and hence $G$ is virtually pro-$p$.  This is (ii).\\

Assume (ii), and let $N$ be the core of a maximal subgroup of $G$ of finite index.  Then $G/N$ admits a faithful primitive permutation action on some set $\Omega$.  This means that $\Phi(O_p(G/N))=1$, by the above lemma.  Hence $O_p(G/N)$ is elementary abelian, and so the same is true for its subgroup $O_p(G)N/N$; hence $\Phi(O_p(G)) \leq N$.  As this holds for all cores of maximal finite index subgroups, we have $\Phi(O_p(G)) \leq \Phi(G)$.  Furthermore, $\Phi(O_p(G))$ is normal in $G$, and so has finite index, since otherwise $O_p(G)$ would be elementary abelian, contradicting the fact that $\Fin(O_p(G))=1$.  Hence $\Phi(G)$ is of finite index.  Since only finitely many subgroups can contain a given subgroup of finite index, this implies (iii).\\

Assume (iii).  Then $\Phi(G)$ is an open pronilpotent normal subgroup of $G$.  (See \cite{Wil}.)  Hence (i) holds.\end{proof}

\begin{cor}Let $G$ be a just infinite profinite group that is virtually pronilpotent but not virtually abelian.  Then $G$ is hereditarily just infinite if and only if $\Phi(G)$ is just infinite.\end{cor}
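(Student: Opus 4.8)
The plan is to derive this directly from the earlier corollary characterising hereditary just infiniteness, namely that a group $G$ which is not virtually abelian and has $\Phi_f(G)$ of finite index is hereditarily just infinite if and only if $\Phi_f(G)$ is just infinite and $\Fin(G)$ is trivial. Two things then need to be checked for our $G$: that $\Phi(G)$ equals $\Phi_f(G)$ and has finite index, and that the hypothesis $\Fin(G)=1$ holds automatically.

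For the finite-index point, I would invoke the preceding proposition: since $G$ is virtually pronilpotent, it has only finitely many maximal open subgroups. As recorded earlier, in a profinite group $\Phi(G)=\Phi_f(G)$, and here this common subgroup is an intersection of finitely many open subgroups, each of finite index; hence $\Phi(G)$ is itself of finite index in $G$. This places $G$ squarely inside the hypotheses of the earlier corollary.

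For the finite radical, I would note that $\Fin(G)=1$ for every just infinite group: a non-trivial finite normal subgroup would be of finite index by the just infinite property, which would force $G$ to be finite, contradicting that $G$ is infinite. Consequently the requirement that $\Fin(G)$ be trivial in the earlier corollary is automatically met, and its statement collapses to exactly the asserted equivalence, that $G$ is hereditarily just infinite if and only if $\Phi(G)$ is just infinite.

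I do not anticipate a genuine obstacle, as the substance is carried by the preceding proposition. The only step requiring a little care is the claim that $\Phi(G)$ has finite index, which relies precisely on the equivalence of virtual pronilpotence with having finitely many maximal open subgroups; were there infinitely many maximal open subgroups, $\Phi(G)$ could fail to be open and the earlier corollary would not apply.
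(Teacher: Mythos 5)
Your proposal is correct and follows exactly the route the paper intends for this corollary (which it states without explicit proof): combine the preceding proposition (virtually pronilpotent $\Rightarrow$ finitely many maximal open subgroups, so $\Phi(G)=\Phi_f(G)$ is open) with the earlier corollary on $\Phi_f(G)$, observing that $\Fin(G)=1$ holds automatically for any just infinite group. Nothing is missing; the argument is complete as written.
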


\paragraph{}We now consider virtually abelian groups, with regard to the question of whether statement $(*)$ of Theorem \ref{maxcor} generalises to such groups.  For simplicity, we will only consider groups that are either discrete or profinite.\\

The following is well-known; see \cite{McC} for a more detailed discussion of the discrete case.

\begin{prop}\label{vastr}Let $G$ be an infinite virtually abelian group that is either discrete or profinite.  Then $G$ is just infinite if and only if the following conditions are satisfied:

(i) there is a self-centralising normal subgroup $A$ of $G$ of finite index, such that $A \cong O^d$ for some integer $d$, where $O=\bZ$ in the discrete case and $O=\bZ_p$ for some $p$ in the profinite case;

(ii) $G/A$ acts as on $A$ as matrices over $O$;

(iii) $G/A$ is irreducible as a matrix group over $F$, where $F$ is the field of fractions of $O$.\end{prop}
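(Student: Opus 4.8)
The plan is to pass to the module-theoretic reformulation of the just infinite property and then read off (i)--(iii) directly. Throughout I would write $Q = G/A$ for the relevant finite quotient and regard an abelian normal subgroup $A$ of finite index as a module over $O[Q]$, using the fact that a subgroup of $A$ is normal in $G$ precisely when it is a $Q$-submodule (since $A$ is abelian, conjugation by $A$ is trivial). I would record first that, being just infinite, $G$ has no non-trivial finite normal subgroup, so $\Fin(G)=1$; this will be used repeatedly.

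For the forward direction I would begin by producing $A$. Starting from any abelian subgroup $B$ of finite index, its core $A_0 := \Core_G(B)$ is an abelian normal subgroup of finite index, and I would replace it by $A := C_G(A_0)$. Since $A_0 \leq Z(A)$ with $[A:A_0]$ finite, Schur's theorem gives that $[A,A]$ is finite; as $[A,A] \lhd G$ and $\Fin(G)=1$, we get $[A,A]=1$, so $A$ is abelian, whence $C_G(A)=A$, i.e.\ $A$ is self-centralising. To see $A \cong O^d$: for any non-trivial $a \in A$ the submodule generated by the finite orbit $Qa$ is finitely generated over $O$ and is a non-trivial normal subgroup of $G$, hence of finite index, so $A$ is a finitely generated $O$-module. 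Its torsion submodule is characteristic, hence a finite normal subgroup of $G$, hence trivial; in the profinite case a Sylow decomposition together with the just infinite property forces all but one prime to contribute trivially, so $A$ is pro-$p$. This yields $A \cong O^d$ with $d \geq 1$, which is (i), and faithfulness of the conjugation action of $Q$ is exactly $C_G(A)=A$, giving $Q \hookrightarrow \GL_d(O)$ and hence (ii). For (iii), if $Q$ acted reducibly on $A \otimes_O F$ there would be a proper non-zero invariant subspace $W$; then $A \cap W$ is a non-trivial $Q$-submodule of $A$ of rank $\dim_F W < d$, hence a non-trivial normal subgroup of $G$ of infinite index, contradicting the just infinite property.

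For the converse I would verify the three defining properties. Infiniteness is clear from $A \cong O^d$. For residual finiteness, the profinite case is automatic, while in the discrete case the subgroups $nA$ are characteristic in $A$, hence normal of finite index in $G$, with $\bigcap_n nA = 1$. The main point is that every non-trivial normal subgroup $N$ has finite index: if $N \cap A = 1$ then $[N,A] \leq N \cap A = 1$ forces $N \leq C_G(A) = A$ and so $N=1$, a contradiction; hence $N \cap A \neq 1$, and then $(N \cap A)\otimes_O F$ is a non-zero invariant subspace of $A \otimes_O F$, which by irreducibility (iii) is everything. Thus $N \cap A$ has full rank $d$ and finite index in $A$, so $N$ has finite index in $G$.

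The main obstacle is the forward construction of $A$ as an honest copy of $O^d$: the self-centralising reduction (via Schur's theorem and $\Fin(G)=1$), the finite-generation argument, and---in the profinite case---the reduction to a single prime must all be assembled, each leaning on triviality of the finite radical. By contrast, the equivalence between $F$-irreducibility and the just infinite property, which is the conceptual heart of the statement, becomes essentially immediate once $A$ is identified and the correspondence between normal subgroups inside $A$ and $Q$-submodules is in hand.
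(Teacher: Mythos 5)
Your proof is correct, but there is nothing in the paper to compare it against: the paper gives no proof of this proposition at all, stating it as well-known and citing McCarthy's paper for a more detailed discussion of the discrete case. Your argument supplies the missing details along the standard lines, and all the essential steps are sound: passing from an arbitrary finite-index abelian subgroup $B$ to the self-centralising $A = C_G(\Core_G(B))$ via Schur's theorem together with $\Fin(G)=1$; the finite-orbit argument showing $A$ is a finitely generated $O$-module; killing torsion (and, in the profinite case, all but one prime) using the just infinite property; and the dictionary between non-trivial normal subgroups of $G$ meeting $A$ non-trivially and non-zero $Q$-invariant subspaces of $A \otimes_O F$, which gives both the necessity of irreducibility and, in the converse, the finite-index conclusion via $[N,A] \leq N \cap A$ and $C_G(A)=A$. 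One point of ordering you should fix in the profinite case: the Sylow decomposition reducing to a single prime must come \emph{before} the finite-generation and torsion arguments, since until $A$ is known to be pro-$p$ the phrase ``finitely generated $O$-module'' has no meaning, and the torsion elements of a general abelian profinite group need not form a finite or even closed subgroup (consider the product of $\bZ/p\bZ$ over all primes $p$); you mention the Sylow step, but you place it after the torsion step. With that reordering, the proof is complete and self-contained, which is arguably more useful than the paper's bare citation.
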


A key observation in establishing Theorem \ref{maxcor} was that every intransitive normal subgroup of a finite permutation group is contained in an intransitive maximal subgroup.  Similarly, an imprimitive finite linear group has a maximal subgroup that is reducible, which leads to the following:

\begin{cor}\label{imprim}In the situation of Proposition \ref{vastr}, suppose that $G/A$ is imprimitive as a matrix group over $F$.  Then $G$ has a maximal subgroup of finite index which is not just infinite.\end{cor}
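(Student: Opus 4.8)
The plan is to mimic, in the linear setting, the argument behind statement $(*)$ of Theorem \ref{maxcor}: just as an intransitive normal subgroup of a finite permutation group lies in an intransitive maximal subgroup, here I would first locate a \emph{reducible} maximal subgroup of the matrix group $G/A$ and then pull it back to $G$. Since $A$ is self-centralising of finite index, the conjugation action makes $\Gamma := G/A$ a finite subgroup of $\GL_d(O) \leq \GL_d(F)$ acting faithfully and irreducibly on $V := F^d$, with $A = O^d$ a full $O$-lattice spanning $V$. Imprimitivity supplies a decomposition $V = V_1 \oplus \dots \oplus V_k$ with $k \geq 2$ on whose summands $\Gamma$ acts transitively.

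To produce a reducible maximal subgroup, I would let $\Gamma_1$ be the setwise stabiliser of $V_1$; it is proper of index $k$ and is reducible. Now let $\bar M$ be any maximal subgroup of $\Gamma$ containing $\Gamma_1$. The key point is that $\bar M$ cannot act transitively on $\{V_1, \dots, V_k\}$: since $\Gamma_1 \leq \bar M$, the stabiliser of $V_1$ in $\bar M$ is exactly $\Gamma_1$, so transitivity would force $[\bar M : \Gamma_1] = k = [\Gamma : \Gamma_1]$ and hence $\bar M = \Gamma$, contradicting properness. Therefore the $\bar M$-orbit of $V_1$ is a proper nonempty set of blocks, and the sum $W$ of the blocks in this orbit is a proper nonzero $\bar M$-invariant subspace of $V$; that is, $\bar M$ is reducible. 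This is the step carrying the real content, being the linear counterpart of the permutation-group observation, and I expect it to be the main obstacle.

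Finally I would lift. Let $M$ be the preimage of $\bar M$ in $G$. Since $M \supseteq A$, every subgroup between $M$ and $G$ contains the normal subgroup $A$ and hence corresponds to a subgroup between $\bar M$ and $\Gamma$, so maximality of $\bar M$ gives maximality of $M$; moreover $M$ has finite index as $\Gamma$ is finite. Set $N := A \cap W$. Because the conjugation action of $M$ on $A$ is through $\bar M$, which preserves $W$, the subgroup $N$ is normal in $M$. Clearing denominators shows that $A \cap W$ spans $W$ over $F$, so $N \neq 1$; and since $W$ is a proper subspace, $A/N \cong (A+W)/W$ is an $O$-lattice of positive rank $d - \dim_F W$, hence infinite, so $N$ has infinite index in $M$. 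Thus $M$ is infinite with a nontrivial normal subgroup of infinite index, and so is not just infinite, as required. The only remaining points are routine: checking that $A \cap W$ meets $W$ in a full-rank lattice and that the quotient is genuinely infinite in both the discrete ($O=\bZ$) and profinite ($O=\bZ_p$) cases.
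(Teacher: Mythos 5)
Your proof is correct and takes essentially the same route as the paper: the corollary is derived there precisely from the observation that an imprimitive finite linear group has a reducible maximal subgroup (your block-stabiliser argument supplies exactly this), combined with Proposition \ref{vastr}. Your explicit verification that the preimage $M$ fails to be just infinite, via the normal subgroup $N = A \cap W$ of infinite index, is just the relevant direction of Proposition \ref{vastr} made concrete, so nothing is missing.
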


However, there are primitive finite linear groups, all of whose maximal subgroups are irreducible, and so statement $(*)$ does not generalise completely to the virtually abelian case.  Let $A \cdotp B$ denote any group $G$ such that $A \unlhd G$ and $G/A \cong B$.

\paragraph{Example 2}(The author thanks Charles Leedham-Green for pointing out this example.)  Let $\rQ_{2^n}$ denote the generalised quaternion group of order $2^n$.  In Examples 10.1.18 of \cite{Lee}, the authors give a $4$-dimensional representation of $\rQ_{16}$ over $\bQ_2$, giving rise to just infinite pro-$2$ groups $G$ of the form $A \cdotp \rQ_{16}$ where $A \cong \bZ^4_2$ and $G/A$ acts faithfully on $A$.  Say such a group $G$ is of \emph{quaternionic type}.  Both $\rQ_{16}$ and all its maximal subgroups act irreducibly over $\bQ_2$, and so every maximal open subgroup of $G$ is just infinite.  However, $\Phi_f(G)$ is not just infinite; indeed, we note that an open subgroup $K$ of $G$ is just infinite if and only if $|G/A:KA/A| \leq 2$.\\

It turns out that the above example describes the only way in which $(*)$ can fail for pro-$p$ groups, and there are no exceptions to $(*)$ among discrete groups for which $O^p(G)=1$; this leads to Theorem \ref{respthm}.  Case (i) of the following is Theorem 10.1.25 of \cite{Lee}, and case (ii) can easily be deduced from results in section 10.1 of \cite{Lee}:

\begin{thm}[Leedham-Green, McKay \cite{Lee}]\label{leethm}Let $G$ be a finite $p$-group with a faithful primitive representation over a field $F$.\\

(i) If $F=\bQ_p$, then either $G$ is $C_p$ acting in dimension $p-1$, or $p=2$ and $G$ is $\rQ_{16}$ acting in dimension $4$.\\

(ii) If $F$ is a subfield of $\bR$ which does not contain $\sqrt{2}$, then $G$ is $C_p$ acting in dimension $p-1$.\end{thm}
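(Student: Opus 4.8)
The plan is to turn primitivity into a rigid structural constraint on $G$, then run through the resulting short list of groups, and finally isolate the two survivors by a local ($2$-adic) Schur-index computation.

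First I would record the one structural consequence of primitivity that does all the work. If $V$ is a faithful primitive $FG$-module and $N\lhd G$, then $G$ transitively permutes the homogeneous $FN$-components of $V|_N$; were there more than one, $V$ would be induced from the stabiliser of a component, contradicting primitivity, so $V|_N$ is homogeneous. Taking $N=A$ abelian normal, $V|_A\cong W^{\oplus t}$ with $W$ irreducible over the commutative ring $FA$, so the image of $FA$ in $\mathrm{End}_F(V)$ is a field $L$ and $A$ embeds in $L^\times$; a finite subgroup of the multiplicative group of a field is cyclic, hence every abelian normal subgroup of $G$ is cyclic. By the classical classification of $p$-groups whose abelian normal subgroups are all cyclic, $G$ is either cyclic or ($p=2$) dihedral, semidihedral, or generalised quaternion $\rQ_{2^n}$; in the non-cyclic cases $G$ has a self-centralising cyclic maximal abelian normal subgroup of index $2$.

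Next I would clear away everything except generalised quaternion. If $G=C_{p^m}$ with $m\ge2$ then, over $\bQ_p$ and likewise over $\bQ$, the faithful $F$-irreducible (multiplication by $\zeta_{p^m}$ on $F(\zeta_{p^m})$) has dimension $[F(\zeta_{p^m}):F]=p\cdot[F(\zeta_{p^{m-1}}):F]$ and, by Frobenius reciprocity, equals the induction of the faithful $F$-irreducible of $C_{p^{m-1}}$, so it is imprimitive; thus the only primitive cyclic case is $G=C_p$, in dimension $p-1$. The dihedral and semidihedral groups possess a non-central involution, and inducing an appropriate $\pm1$-valued character of the index-$2$ subgroup it generates reproduces the faithful module, so these groups carry no faithful primitive module. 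In the real case (ii) this is even cleaner: the faithful character of a semidihedral group is non-real, while that of $D_{2^n}$ or $\rQ_{2^n}$ with $n\ge4$ generates a field containing $\sqrt2$, which by hypothesis lies outside $F$; so over such $F$ nothing but $C_p$ (in dimension $p-1$ when $F=\bQ$) survives, giving (ii).

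It remains to treat $\rQ_{2^n}$ over $\bQ_2$, which is the heart of the argument. Here the decisive feature is that $\rQ_{2^n}$ has a \emph{unique} involution, which is central and lies in every non-trivial subgroup; since $\ker(\mathrm{Ind}_H^G U)=\Core_G(\ker U)$, any faithful induction must have $U$ faithful, and (using also that $\bQ_2^\times$ has no element of order $4$, so index-$\ge4$ subgroups cannot induce faithfully in the right dimension) the only possibilities are induction from one of the two index-$2$ subgroups $C_{2^{n-1}}$ or $\rQ_{2^{n-1}}$ carrying a faithful module of exactly half the dimension of $V$. Writing $\dim_{\bQ_2}V=m\cdot2^{n-2}$ with $m$ the Schur index, this reduces primitivity entirely to the value of $m$ and of the index of $\rQ_{2^{n-1}}$: induction from $C_{2^{n-1}}$ is possible iff $m=2$, and from $\rQ_{2^{n-1}}$ iff the two indices agree. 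The main obstacle is therefore the purely arithmetic computation, essentially Examples 10.1.18 of \cite{Lee}, that the $2$-adic Schur index of the faithful representation of $\rQ_{2^n}$ equals $2$ for $\rQ_8$ and $1$ for all $n\ge4$ (the ramification of the relevant quaternion algebra over $\bQ_2(\sqrt2)$ migrating entirely to the archimedean places). Granting this, $\rQ_8$ is induced from $C_4$ and $\rQ_{2^n}$ with $n\ge5$ is induced from $\rQ_{2^{n-1}}$, both imprimitive, while $\rQ_{16}$, for which $m=1$ but the index of $\rQ_8$ is $2$, admits no such induction and is primitive in dimension $4$. The bookkeeping confirming that the candidate induced modules are irreducible and genuinely isomorphic to $V$ is routine once the indices are known; and since case (i) is literally Theorem 10.1.25 of \cite{Lee}, one may alternatively cite it outright and keep the above only for the short deduction of (ii).
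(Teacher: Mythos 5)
First, a structural point: the paper does not prove this theorem at all. It is stated with the attribution ``[Leedham-Green, McKay \cite{Lee}]'', case (i) being quoted as Theorem 10.1.25 of \cite{Lee} and case (ii) said to follow from results in section 10.1 of \cite{Lee}; no argument appears in the paper. So there is no paper proof to match your proposal against. What you have written is the standard skeleton one would use: Clifford theory forces $V$ restricted to any normal subgroup to be homogeneous, hence every abelian normal subgroup embeds in the multiplicative group of a field and is cyclic; the classical classification then leaves cyclic, dihedral, semidihedral and generalised quaternion groups; and the quaternion family is settled by local Schur indices. Your quaternion endgame is correct: granting $m(\rQ_8)=2$ and $m(\rQ_{2^n})=1$ for $n\geq4$ over $\bQ_2$, induction from $C_{2^{n-1}}$ matches dimensions exactly when $m=2$ and induction from $\rQ_{2^{n-1}}$ exactly when the two indices agree, which makes $\rQ_8$ and $\rQ_{2^n}$, $n\geq5$, imprimitive and leaves $\rQ_{16}$ primitive in dimension $4$. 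Note, however, that these index values are themselves deferred to \cite{Lee}, so the proposal does not actually remove the dependence on that source; the heart of the argument remains a citation, just a more localised one than the paper's.

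There are two genuine gaps. (a) The dihedral/semidihedral step is false as stated: a non-central involution generates a subgroup of order $2$, not of index $2$, and inducing a $\pm1$-valued (hence $1$-dimensional) character from an index-$2$ subgroup produces a $2$-dimensional module, whereas the faithful $\bQ_2$-irreducible of the dihedral or semidihedral group of order $2^n$ ($n\geq4$) has dimension $2^{n-2}\geq4$. The correct assertion is that this module is induced from the \emph{faithful irreducible} module of an index-$2$ subgroup (for instance the dihedral subgroup that is the normal closure of a non-central involution), and verifying this needs the same Galois-orbit and character bookkeeping you carry out for the quaternion case; it does not come from a linear character. (b) In case (ii) your treatment of cyclic groups rests on the identity $[F(\zeta_{p^m}):F]=p\,[F(\zeta_{p^{m-1}}):F]$, which holds for $F=\bQ$ and $F=\bQ_p$ but fails for general real subfields. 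Concretely, take $F=\bQ(\zeta_9+\zeta_9^{-1})$, a subfield of $\bR$ of degree $3$ over $\bQ$, so not containing $\sqrt2$: then $F(\zeta_9)$ is a faithful $2$-dimensional irreducible $F$-representation of $C_9$, and it is primitive, since any module induced from a proper subgroup of $C_9$ has dimension divisible by $3$. So your proof of (ii) only goes through for $F=\bQ$ (which is all the paper actually uses, $F$ there being the fraction field of $\bZ$), and this example shows that the statement of (ii) as transcribed must be read with additional care. Relatedly, the inference ``the character field contains $\sqrt2\notin F$, hence the representation is imprimitive'' is not valid on its own: $\rQ_{16}$ over $\bQ_2$ has character field $\bQ_2(\sqrt2)\neq\bQ_2$ and is nevertheless primitive, which is exactly the exceptional case the theorem records.
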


\begin{proof}[Proof of Theorem \ref{respthm}]By Lemma \ref{nothji}, it suffices to prove that $H$ is hereditarily just infinite.  By Theorem \ref{maxcor}, we may assume $H$ is virtually abelian, and hence of the form described in Proposition \ref{vastr}.  Given our hypotheses, the result now follows immediately from Theorem \ref{leethm} together with Corollary \ref{imprim}.\end{proof}

Once we allow multiple primes to appear, there is much more potential for primitive finite linear groups, and hence for just infinite discrete or profinite groups which do not obey $(*)$.  We give one example here, but it seems likely that there are many others, with no straightforward way of classifying them.
 
\paragraph{Example 3}(The author thanks John Bray for pointing out this example; more details can be found in \cite{Gri}.)  Let $E$ be the extraspecial group of order $2^7$ that is a central product of dihedral groups of order $8$.  Then there is a group $L$ containing $E$ as a normal subgroup with $C_L(E)=Z(E)$, such that $L/E \cong \Out(E) \cong \Alt(8)$, and the smallest supplement to $E$ in $L$ is the double cover $2 \cdotp \Alt(8)$ of $\Alt(8)$.  Furthermore, $L$ has a faithful irreducible $8$-dimensional representation over $\bC$, which can in fact be realised over $\bZ$.  Let $G$ be the corresponding semidirect product $O^8 \rtimes L$, where $O$ is $\bZ$ or $\bZ_p$ for some $p$; then all maximal subgroups of $G$ of finite index contain a group of the form $O^8 \rtimes E$ or $O^8 \rtimes (2 \cdotp \Alt(8))$.  For both $E$ (see for instance \cite{Doe}) and $2 \cdotp \Alt(8)$ (see \cite{ATL}), the smallest faithful representation in characteristic $0$ has dimension $8$.  It follows that $G$ is a just infinite group, which can be chosen to be either discrete or virtually pro-$p$ with no restrictions on the prime $p$, and every maximal subgroup of $G$ of finite index is just infinite in all cases.  However, $G$ is evidently not hereditarily just infinite.

\section*{Acknowledgments}This paper is based on results obtained by the author while under the supervision of Robert Wilson at Queen Mary, University of London (QMUL).  The author would also like to thank Charles Leedham-Green for introducing the author to the study of just infinite groups, for his continuing advice and guidance in general, and for his specific advice and corrections concerning this paper; Yiftach Barnea for informing the author about recent developments in just infinite profinite groups; and Peter Cameron for his advice on permutation groups.  The author acknowledges financial support provided by EPSRC and QMUL for the duration of his doctoral studies.

\end{document}